\newtheorem{theorem}{Theorem}[section]
\newtheorem{proposition}[theorem]{Proposition}
\theoremstyle{definition}
\newtheorem{definition}[theorem]{Definition}
\numberwithin{equation}{section}
\renewcommand{\geq}{\geqslant}
\renewcommand{\leq}{\leqslant}
\title{$D(X) < 1$ or $\hat{D}(X) < 1$ imply Property($K$)}
\author[Tim Dalby]{Tim Dalby}
\date{\today}
\keywords{weak fixed point property, property($K$), Opial condition, Opial's modulus}
\subjclass[2010]{46B10, 47H09, 47H10}
\email{tim\_dalby@bigpond.com}
\begin{document}

\parindent = 0pt
\parskip = 8pt

\begin{abstract}

Two new Banach space moduli, that involve weak convergent sequences, are introduced.  It is shown that if either one of these moduli are strictly less than 1 then the Banach space has 
Property($K$).

\end{abstract}

\maketitle

\section{Introduction}

A Banach space, $X$, has the weak fixed point property, w-FPP, if every nonexpansive mapping, $T$, on every weak compact convex nonempty subset, $C$, has a fixed point.  The past forty or so years has seen a number of Banach space properties shown to imply the w-FPP.  Some such properties are weak normal structure, Opial's condition, Property($K$)  and Property($M$).  Here two new moduli are introduced and are linked to one of these properties, Property($K$).  More information on the w-FPP and associated Banach space properties and moduli can be found in [3].

The key definitions and terminology are below.

\begin{definition}

Sims, [6]

A Banach space $X$ has property($K$) if there exists $K \in [0, 1)$ such that whenever $x_n \rightharpoonup 0, \lim_{ n \rightarrow \infty} \| x_n \| = 1 \mbox{ and }  \liminf_{n \rightarrow \infty} \| x_n - x \|  \leq 1 \mbox{ then } \| x \| \leq K.$

\end{definition}

\begin {definition}

Opial [5] 

A Banach space has Opial's condition if
\[ x_n \rightharpoonup 0 \ \mbox {and } x \not = 0 \mbox { implies } \limsup_n \| x_n \| < \limsup_n \| x_n - x \|. \]

The condition remains the same if both the $\limsup$s are replaced by $\liminf$s.

\end {definition}

Later a modulus was introduced to gauge the strength of Opial's condition and a stronger version of the condition was defined.

\begin{definition}

Lin, Tan and Xu, [4]

Opial's modulus is
\[ r_X(c) = \inf \{ \liminf_{n\rightarrow \infty} \| x_n - x \| - 1: c \geq 0, \| x \| \geq c,  x_n \rightharpoonup 0 \mbox{ and }\liminf_{n\rightarrow \infty} \| x_n \| \geq 1 \}. \]

$X$ is said to have uniform Opial's condition if $r_X(c) > 0$ for all $c > 0.$  See [4] for more details.

\end{definition}

There is a direct link between Opial's modulus and Property($K$).  Dalby proved in [1] that $r_{X}(1) > 0$ is equivalent to $X$ having Property($K$).  This will be used in the next section.

The two new moduli are defined next.

\begin{definition}

Let $X$ be a Banach space.  Let

\[ D(X) = \sup\{ \liminf_{n \rightarrow \infty}\| x_n - x \|: x_n \rightharpoonup x, \| x_n \|  = 1 \mbox{ for all } n\} \]
and let
\[ \hat{D}(X) = \sup\{ \| x \|: x_n \rightharpoonup x, \| x_n \| = 1 \mbox{ for all } n\}. \]

\end{definition}

So $0 \leq D(X) \leq 2$ and $ \hat{D}(X) \leq 1.$

Some values for $D(X)$ are $D(\ell_1) = 0, D(c_0) = 1 \mbox{ and }  D(\ell_p) = 2^{1/p}.$

The reason that these two moduli are introduced is that in [2] Dalby showed that if in the dual, $X^*,$ a certain weak* convergent sequence, $(w_n^*),$ satisfies either one of two properties then $X$ satisfied the w-FPP.  Let $w_n^* \stackrel{*}{\rightharpoonup} w^* \mbox { where } \| w^* \| \leq 1$ then if $w^*$  is `deep' within the dual unit ball or $w_n^* - w^*$ eventually `deep' within the dual unit ball then $X$ has the w-FPP. So $D(X^*) < 1$ or $\hat{D}(X^*) < 1$ ensures this.

The w-FPP is known to be separably determined so all Banach spaces are assumed to be separable.

\section{Results}

\begin{proposition}

Let $X$ be a separable Banach space.  If $D(X) < 1$ then $r_X(1) > 0.$  That is, $X$ has Property($K$).

\end{proposition}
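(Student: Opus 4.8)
The plan is to reduce everything to Opial's modulus and prove the quantitative estimate $r_X(1) \ge \frac{1}{D(X)} - 1$. Since $D(X) < 1$ makes the right-hand side strictly positive, the cited equivalence of Dalby [1] (that $r_X(1) > 0$ is the same as Property($K$)) then finishes the argument. Thus the real work is to bound $\liminf_{n}\|y_n - y\|$ from below for an arbitrary admissible configuration in the definition of $r_X(1)$: a sequence $y_n \rightharpoonup 0$ with $\liminf_n \|y_n\| \ge 1$ together with a point $y$ satisfying $\|y\| \ge 1$. (If no such configuration exists, the infimum defining $r_X(1)$ is taken over the empty set and equals $+\infty$, so we may assume one is given.)

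First I would set $L = \liminf_n \|y_n - y\|$ and check that $L > 0$: were $L = 0$, a subsequence of $y_n$ would converge in norm to $y$, but $y_n \rightharpoonup 0$ forces the limit to be $0$, contradicting $\|y\| \ge 1$. I then pass to a subsequence along which both $\|y_n - y\| \to L$ and $\|y_n\| \to a$ for some $a$; because every subsequential limit of $\|y_n\|$ dominates $\liminf_n \|y_n\|$, we have $a \ge 1$.

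The crux is to manufacture from this data a normalized weakly convergent sequence to feed the definition of $D(X)$. The natural candidate is $x_n = (y_n - y)/\|y_n - y\|$, which has $\|x_n\| = 1$ and, since $y_n - y \rightharpoonup -y$ while $\|y_n - y\| \to L$, converges weakly to $-y/L$. Writing $t_n = \|y_n - y\|$ and regrouping,
\[ x_n + \frac{y}{L} = \frac{y_n}{t_n} + y\Big(\frac{1}{L} - \frac{1}{t_n}\Big), \]
where the second term has norm $\|y\|\,|1/L - 1/t_n| \to 0$ and $\|y_n/t_n\| \to a/L$, so $\|x_n + y/L\| \to a/L$. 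Applying the definition of $D(X)$ to $(x_n)$ with weak limit $-y/L$ gives $D(X) \ge \liminf_n \|x_n - (-y/L)\| = a/L \ge 1/L$, hence $L \ge 1/D(X)$. Therefore $\liminf_n \|y_n - y\| - 1 \ge 1/D(X) - 1$, and taking the infimum over all admissible configurations yields $r_X(1) \ge 1/D(X) - 1 > 0$.

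I expect the main obstacle to be the bookkeeping in this last step: extracting the subsequence so the two norm limits exist simultaneously, verifying that multiplying by the scalars $1/t_n \to 1/L$ really transports weak convergence (so that the weak limit of $x_n$ is exactly $-y/L$), and controlling the perturbation term so the convergence $\|x_n + y/L\| \to a/L$ is rigorous rather than heuristic. The inequality $a \ge 1$ deserves to be stated explicitly, since it is precisely what injects the factor that ultimately overcomes $D(X) < 1$.
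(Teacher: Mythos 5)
Your proof is correct and follows essentially the same route as the paper: normalize $y_n - y$ (the paper works with $x_n + x$, an immaterial sign change), identify its weak limit, and feed the resulting norm-one sequence into the definition of $D(X)$ to obtain $\liminf_n \|y_n - y\| \ge 1/D(X)$ and hence $r_X(1) \ge 1/D(X) - 1 > 0$. If anything, your version is more careful than the paper's: you pass to a subsequence along which $\|y_n - y\|$ and $\|y_n\|$ both converge before asserting that the normalized sequence converges weakly to $-y/L$, a point the paper's computation (which manipulates $\liminf$s of products rather loosely) glosses over.
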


\begin{proof}

Let $x_n \rightharpoonup 0, \liminf_{n \rightarrow \infty}\| x_n \| \geq 1 \mbox{ and } \| x \| \geq 1.$

Using the lower semi-continuity of the norm, $\liminf_{n \rightarrow \infty}\| x_n + x \| \geq \| x \| \geq 1.$  By taking subsequences if necessary we may assume that $\| x_n + x \| \not = 0$ for all $n.$

Now $ \left \| \frac{\displaystyle {x_n + x}}{ \displaystyle {\| x_n + x \|}} \right \| = 1 \mbox{ for all } n, {\frac{\displaystyle{x_n + x}}{ \displaystyle{\| x_n + x \| }}} \rightharpoonup \frac{\displaystyle{x}}{ \displaystyle \liminf_{n \rightarrow \infty}\| x_n + x \| }.$

For ease of reading let $\alpha = \liminf_{n \rightarrow \infty}\| x_n + x \|.$

Then 

\begin{align*}
  \liminf_{n \rightarrow \infty}\left \| \frac{\displaystyle x_n + x}{\| x_n + x \|} - \frac{\displaystyle x}{\displaystyle \alpha} \right \| 
 & = \liminf_{n \rightarrow \infty} \frac{\displaystyle 1}{\| x_n + x \|} \liminf_{n \rightarrow \infty}\left \| x_n + x - \| x_n + x \| \frac{\displaystyle x}{\displaystyle \alpha} \right \|  \\
 & =  \frac{\displaystyle 1}{\alpha} \liminf_{n \rightarrow \infty}\left \| x_n + x - \| x_n + x \| \frac{\displaystyle x}{\displaystyle \alpha} \right \|  \\
 & =  \frac{\displaystyle 1}{\alpha}  \liminf_{n \rightarrow \infty}\left \| x_n - \left (\frac{\displaystyle \| x_n + x \|}{ \alpha} - 1 \right ) x \right \| \\
 & \geq \frac{\displaystyle 1}{\alpha} \left | \liminf_{n \rightarrow \infty} \| x_n \|  - \liminf_{n \rightarrow \infty} \left | \frac{\displaystyle \| x_n + x \|}{ \alpha} - 1 \right | \| x \| \right | \\
 & =  \frac{\displaystyle 1}{\alpha} \left | \liminf_{n \rightarrow \infty} \| x_n \|  + \left | \frac{\displaystyle \alpha}{\alpha} - 1 \right | \| x \| \right ) \\
 & = \frac{\displaystyle 1}{\alpha}  \liminf_{n \rightarrow \infty} \| x_n \|  \\
 & \geq \frac{\displaystyle 1}{\alpha} \\
 & = \frac{\displaystyle 1}{\liminf_{n \rightarrow \infty}\| x_n + x \|}.
 \end{align*}

We have 

\[ D(X) \geq \liminf_{n \rightarrow \infty}\left \| \frac{\displaystyle x_n + x}{\displaystyle \| x_n + x \|} - \frac{\displaystyle x}{\displaystyle \liminf_{n \rightarrow \infty}\| x_n + x \|} \right \| \geq \frac{1}{\displaystyle \liminf_{n \rightarrow \infty} \| x_n + x \|}.  \qquad \dag \]

So $\liminf_{n \rightarrow \infty} \| x_n + x \| \geq \frac{\displaystyle 1}{\displaystyle D(X)}.$

This means that $r_X(1)  + 1 \geq \frac{\displaystyle 1}{\displaystyle D(X)} \mbox{ or } r_X(1) \geq \frac{\displaystyle 1}{\displaystyle D(X)} - 1 > 0.$

\end{proof}

A second way to prove this proposition is via a contradiction as shown below.

\begin{proof}

Assume that $D(X) < 1 \mbox{ and } r_X(1) \not > 0.$  Then $r_X(1) = 0.$  

Given $\epsilon > 0 $ there exists a sequence $(x_n) \mbox{ in } X \mbox{ where } x_n \rightharpoonup 0, \newline \liminf_{n \rightarrow \infty}\| x_n \| \geq 1 \mbox{ and } x \in X, \| x \| \geq 1 \mbox{ such that } \liminf_{n \rightarrow \infty}\| x_n + x \| < 1 + \epsilon.$

Therefore $1 \leq \| x \| \leq \liminf_{n \rightarrow \infty}\| x_n + x \| < 1 + \epsilon.$  So apart from the last inequality the set up is the same as in the previous proof and this proof follows the same pathway.  So now jumping to a line above, the one labeled with \dag, 

\[ D(X)  \geq \frac{1}{\displaystyle \liminf_{n \rightarrow \infty} \| x_n + x \|} > \frac{1}{ \displaystyle 1 + \epsilon }. \]

Letting $\epsilon \rightarrow 0 \mbox{ gives } D(X) \geq 1 \mbox{ but } D(X) < 1.$

So the desired contradiction is arrived at.
 
\end{proof}

Next is the second moduli's turn.

\begin{proposition}
Let $X$ be a separable Banach space.  If $\hat{D}(X) < 1$ then $r_X(1) > 0.$  That is, $X$ has Property($K$).
\end{proposition}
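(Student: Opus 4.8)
The plan is to mirror the construction in the previous proposition, but to exploit the simpler nature of $\hat{D}(X)$, which controls the norm of the weak limit directly rather than a $\liminf$ of differences. First I would set up the same configuration: take $x_n \rightharpoonup 0$ with $\liminf_{n \rightarrow \infty}\|x_n\| \geq 1$ and $x \in X$ with $\|x\| \geq 1$, and write $\alpha = \liminf_{n \rightarrow \infty}\|x_n + x\|$. By the weak lower semi-continuity of the norm, $\alpha \geq \|x\| \geq 1$, so in particular $\alpha > 0$, and after passing to a subsequence I may assume $\|x_n + x\| \to \alpha$ with $\|x_n + x\| \neq 0$ for every $n$.

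Next I would normalize. Setting $y_n = \frac{x_n + x}{\|x_n + x\|}$, each $\|y_n\| = 1$, and since $x_n + x \rightharpoonup x$ while $\|x_n + x\| \to \alpha$, testing against any $f \in X^*$ gives $f(y_n) \to f(x)/\alpha$; hence $y_n \rightharpoonup x/\alpha$. This is exactly the kind of norm-one weakly convergent sequence appearing in the definition of $\hat{D}(X)$, so

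\[ \hat{D}(X) \geq \left\| \frac{x}{\alpha} \right\| = \frac{\|x\|}{\alpha} \geq \frac{1}{\alpha}, \]

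which rearranges to $\alpha \geq 1/\hat{D}(X)$. This is where the argument is genuinely shorter than the $D(X)$ case: no triangle-inequality estimate of $\liminf_{n \rightarrow \infty}\|y_n - x/\alpha\|$ is needed, because $\hat{D}(X)$ already bounds $\|x/\alpha\|$ outright.

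Finally, since the configuration was arbitrary and $r_X(1)$ is an infimum over exactly such configurations (using that $-x$ is admissible whenever $x$ is, so that the infimum over $\liminf_{n \rightarrow \infty}\|x_n - x\|$ equals the one over $\liminf_{n \rightarrow \infty}\|x_n + x\|$), I would conclude $r_X(1) + 1 \geq 1/\hat{D}(X)$, hence $r_X(1) \geq 1/\hat{D}(X) - 1 > 0$ because $\hat{D}(X) < 1$. By Dalby's equivalence in [1], $r_X(1) > 0$ is the same as Property($K$). The only points requiring care are the passage to a subsequence making $\|x_n + x\|$ convergent and nonzero, so that the normalization and its weak limit are well defined, and the observation that $\alpha > 0$ is forced by $\|x\| \geq 1$; neither is a real obstacle, so I expect the whole proof to be a streamlined version of the preceding one.
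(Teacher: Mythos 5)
Your argument is essentially identical to the paper's first proof of this proposition: normalize $x_n+x$, observe the resulting norm-one sequence converges weakly to $x/\alpha$, and read off $\hat{D}(X)\geq \|x\|/\alpha$ to get $\alpha\geq 1/\hat{D}(X)$ and hence $r_X(1)>0$. You are in fact slightly more careful than the paper on two minor points (passing to a subsequence so that $\|x_n+x\|$ actually converges, which is needed for the weak limit of the normalized sequence to be $x/\alpha$, and the sign switch between $x_n+x$ and $x_n-x$), but the route is the same.
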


\begin{proof}

Let $x_n \rightharpoonup 0, \liminf_{n \rightarrow \infty}\| x_n \| \geq 1 \mbox{ and } \| x \| \geq 1.$

Now $x_n + x \rightharpoonup x \mbox{ so } \liminf_{n \rightarrow \infty}\| x_n + x \| \geq \| x \| \geq 1.$  Without loss of generality we may assume $\| x_n + x \| \not = 0$ for all $n.$

Then $\left \| \frac{\displaystyle x_n + x}{\displaystyle \|x_n + x \|} \right \| = 1 \mbox{ for all } n, \frac{\displaystyle x_n + x}{\displaystyle \|x_n + x \|} \rightharpoonup \frac{\displaystyle x}{ \displaystyle \liminf_{n \rightarrow \infty}\| x_n + x \| }.$

Hence $1 > \hat{D}(X) \geq \frac{\displaystyle \| x \|}{\displaystyle \liminf_{n \rightarrow \infty}\| x_n + x \| }$ leading to

\begin{align*}
\| x \| & \leq \liminf_{n \rightarrow \infty}\| x_n + x \| \hat{D}(X) \\
\liminf_{n \rightarrow \infty}\| x_n + x \| & \geq \frac{\displaystyle \| x \|}{\displaystyle \hat{D}(X)} \\
& \geq \frac{\displaystyle 1}{\displaystyle \hat{D}(X)} \\
\mbox{ Thus } r_X(1) + 1 & \geq \frac{\displaystyle 1}{\displaystyle \hat{D}(X)} \\
r_X(1) & > \frac{\displaystyle 1}{\displaystyle \hat{D}(X)} - 1 \\
& > 0.
\end{align*}

\end{proof}

A second way to prove this proposition is by finding a value of $K$ for Property($K$).

\begin{proof}

Let $x_n \rightharpoonup 0, \| x_n \| = 1 \mbox{ for all } n \mbox{ and }\liminf_{n \rightarrow \infty}\| x_n - x \| \leq 1.$

If $\liminf_{n \rightarrow \infty}\| x_n - x \| = 0$ then because $\| x \| \leq \liminf_{n \rightarrow \infty}\| x_n - x \|$ we have $x = 0$ and $K$ can be taken as zero.

So assume $\liminf_{n \rightarrow \infty}\| x_n - x \| > 0$ and by taking subsequences if necessary, assume $\| x_n - x \| \not = 0$ for all $n.$

Using the same argument as in the previous proof 

\[\| x \| \leq \liminf_{n \rightarrow \infty}\| x_n - x \| \hat{D}(X) \leq \hat{D}(X) < 1.\]

So $K$ can be taken as $\hat{D}(X).$

\end{proof}

\end{document}